\providecommand{\U}[1]{\protect\rule{.1in}{.1in}}
\providecommand{\U}[1]{\protect\rule{.1in}{.1in}}
\providecommand{\U}[1]{\protect\rule{.1in}{.1in}}
\newtheorem{theorem}{Theorem}[section]
\newtheorem{definition}[theorem]{Definition}
\newtheorem{corollary}[theorem]{Corollary}
\theoremstyle{definition}
\newcommand\union{{\cup}}
\def\union{\mathop{ \cup}}
\begin{document}
\title{Sobolev spaces on hypergroup Gelfand pairs}
\author{Ky T. Bataka, Murphy  E. Egwe and Yaogan Mensah}
\address{Ky T. Bataka, Department of Mathematics, University of Lom\'e, Togo}
\email{\textcolor[rgb]{0.00,0.00,0.84}{btaka.2005@gmail.com}}
\address{Murphy E. Egwe, Department of Mathematics, University of Ibadan, Nigeria}
\email{\textcolor[rgb]{0.00,0.00,0.84}{egwemurphy@gmail.com, murphy.egwe@ui.edu.ng}}
\address{Yaogan Mensah, Department of Mathematics, University of Lom\'e, Togo}
\email{\textcolor[rgb]{0.00,0.00,0.84}{mensahyaogan2@gmail.com, ymensah@univ-lome.org}}

\begin{abstract}
This paper  introduces Sobolev spaces over Gelfand pairs in the framework of hypergroups. The Sobolev spaces in question are constructed from the Fourier transform on hypergroup Gelfand pairs.  Mainly, the paper focuses on the investigation of  Sobolev embedding results. 
\end{abstract}
\maketitle

Keywords and phrases: hypergroup, Sobolev space, Sobolev embedding theorem,  Fourier transform.
\newline
2020 Mathematics Subject Classification:  43A15, 43A32, 43A90. 

\section{Introduction}
Although most Sobolev spaces are defined from the notion of weak derivation, there is a class of Sobolev spaces that has an equivalent definition related to  Fourier transform. For instance, these are the Sobolev spaces  $H^s(\mathbb{R}^n)$ defined as Bessel potential \cite{Behzadan}. More explicitly, $H^s(\mathbb{R}^n)$ is  defined as the set of tempered distributions $u$ on $\mathbb{R}^n$ such that $\|\mathcal{F}^{-1}(\langle \xi\rangle^s\mathcal{F}u)\|_{L^2}<\infty$, where $\langle \xi\rangle=(1+|\xi|^2)^{\frac{1}{2}}$, $s$ is a positive real number and $\mathcal{F}$ denotes the Fourier transform on the space of tempered distributions on $\mathbb{R}^n$. Any abstract generalization of the Fourier transform on groups or other algebraic structures leads to a construction of an analogue of the  Sobolev spaces $H^s(\mathbb{R}^n)$. 

In  \cite{Gorka1, Gorka2}, G\'orka et al. constructed  and studied major properties of the Sobolev spaces $H^s(G)$ where $G$ is a Hausdorff locally compact abelian group using the Fourier transform on locally compact abelian groups. They  proved many continuous embedding theorems and studied the  bosonic string equation. Similar study of Sobolev spaces was done in \cite{Kumar} by M. Kumar and N. S. Kumar  for compact nonnecessarily abelian groups using the Fourier transform on compact group; in other words, they studied some properties of the  Sobolev spaces $H^s(G)$ where $G$ is a compact group. 

In \cite{Krukowski}, Krukowski considered the Sobolev spaces $H^s(K\setminus G/K)$ contructed on  a group Gelfand pair $(G,K)$ by the means of the spherical Fourier transform. This  generalizes the previous works since an abelian group $G$ can be identified with the Gelfand pair $(G, \{e\})$ where $e$ is the neutral element of $G$. Later, in \cite{Mensah},  Mensah constructed and studied the Sobolev $H^{s}_{\delta,\gamma}(G,E)$, where $G$ is a locally compact Hausdorff group and $E$ is a complex Banach space,   by the means of the spherical Fourier transform of type $\delta$ which is a transformation related to a unitary irreducible representation $\delta$ of a compact subgroup $K$ of the locally compact group $G$ forged by Kangni and Tour\'e \cite{Kangni1996, Kangni2001}. When $\delta$ is trivial, then one recovers the classical spherical Fourier transform on  Gelfand pairs  (refer to the book \cite{Wolf}). 

This article proceeds from the same generalization. Our aim is to construct and study the Bessel potential like  Sobolev spaces on hypergroups Gelfand pairs. To achieve  our goals, we relied on the article \cite{Brou} by Brou et al. in which the study of the Fourier transform related to hypergroups Gelfand pairs was considered. As results, we obtain some  analogues of Sobolev embedding theorems. 

The outline of the remaining sections of the paper is as follows. In Section \ref{Preliminaries}, we summarize some major facts about hypergroups Gelfand pairs  and the related Fourier transform.     In Section \ref{Sobolev spaces on the hypergroup Gelfand pair}, we state and prove the main results.
\section{Preliminaries}\label{Preliminaries}
These preliminary notes are based on \cite{Bloom, Brou, Brou2}.
Let $G$ is considered to be  a locally compact Hausdorff space. We denote by 
\begin{itemize}
\item $C(G)$ the set of complex-valued continuous functions on $G$,
\item $M(G)$ the set of Radon measures on $G$,
\item $M_b(G)$ the subset of $M(G)$ consisting of bounded measures, 
\item $M_1(G)$ the subset of  $M_b(G)$ consisting of probability measures,
\item $\mathfrak{C}(G)$ the set of compact subspaces of $G$,
\item $\delta_x$ the point measure at the element $x$ of $G$. 
\end{itemize}
The set $M(G)$ is endowed with the c\^one topology  \cite{Jewett} while  $\mathfrak{C}(G)$ is endowed with  the Michael topology \cite{Michael}.
\begin{definition}[\cite{Brou}]
A locally compact space $G$ is called a {\rm hypergroup} if the following properties hold.
\begin{enumerate}
\item There exists a binary operation $\ast$ (called convolution) on $M_b(G)$ which turns it into an associative algebra such that 
\begin{enumerate}
\item the mapping $(\mu,\nu)\longmapsto \mu\ast\nu$ is continuous from $M_b(G)\times M_b(G)$ into $M_b(G)$,
\item $\forall x,y\in G$, $\delta_x\ast\delta_y$ is a probability measure such that $\mbox{supp}(\delta_x\ast\delta_y)$ is compact.
\item The mapping $(x,y)\longmapsto \mbox{supp}( \delta_x\ast\delta_y)$   is continuous from $G\times G$ into $\mathfrak{C}(G)$.
\end{enumerate}
\item There exists a unique element $e$ in $G$ (called  neutral element) such that 
$$\forall x\in G, \delta_x\ast \delta_e=\delta_e\ast \delta_x=\delta_x.$$
\item There exists an involutive homeomorphism $\diamond: G\longrightarrow G$ such that for all $x,y\in G$, 
$$(\delta_x\ast \delta_y)^\diamond=\delta_{y^\diamond}\ast \delta_{x^\diamond}$$ where $(\delta_x\ast \delta_y)^\diamond(f):=(\delta_x\ast \delta_y )(f^\diamond)$ and $f^\diamond(x)=f(x^\diamond)$ for all $f\in C(G)$. 
\item $\forall x,y,z\in G$, $z\in \mbox{supp}( \delta_x\ast\delta_y)\Longleftrightarrow x\in \mbox{supp}( \delta_z\ast\delta_{y^\diamond})$.
\end{enumerate}
\end{definition}  
As in the group case, one may define the notion of subhypergroup by stability with respect to the involution and convolution.   
\begin{definition}[\cite{Brou2}]
Let $H$ be a closed nonempty subset of the hypergroup $G$. Call $H$ a subhypergroup of $G$ if 
\begin{itemize}
\item $\forall x\in H,\,  x^\diamond\in H$,
\item $\forall x,y\in H,\, (\delta_x\ast \delta_y)\subset H$.
\end{itemize}
\end{definition}
Let $G$ be a hypergroup and let $K$ be a compact subhypergroup of $G$. For $x,y\in G$, $x\ast y$ denotes the support of  $\delta_x\ast \delta_y$. The double coset of $x$ with respect to $K$ is 
$$KxK=\lbrace k_1\ast  x\ast k_2 : k_1,k_2\in K\rbrace=\union\limits_{k_1,k_2\in K}\text{supp}(\delta_{k_1}\ast \delta_x\ast \delta_{k_2}).$$ 
For $f\in C(G)$, set 
$$f(x\ast y):= (\delta_x\ast \delta_y)(f)=\int_Gf(z)d(\delta_x\ast \delta_y)(z).$$
Also, for $\mu,\nu\in M(G)$, define $\mu\ast\nu$ by
$$\mu\ast\nu (f)=\int_G \int_G f(x\ast y)d\mu(x)d\nu (y), \, f\in C(G).$$
A function $f\in C(G)$ is said to be $K$-bi-invariant if 
 $$\forall k_1,k_2\in K, \forall x\in G, f(k_1\ast  x\ast k_2)=f(x).$$
 
Denote by $\mathcal{K}(G)$ the set of continuous functions on $G$ with compact support and by $\mathcal{K}^\natural(G)$ the subset of $\mathcal{K}(G)$ consisting of $K$-bi-invariant functions. 

Now, assume that the hypergroup $G$ is provided with a left Haar measure and $K$ is equipped with a normalized Haar measure. 
For $f\in C(G)$, set 
$$f^\natural (x)=\displaystyle\int_K\int_K f(k_1\ast x\ast k_2)dk_1dk_2.$$  For a measure $\mu\in M(G)$, set $\mu^\natural (f)=\mu(f^\natural),\, f\in \mathcal{K}(G)$. The measure $\mu$ is called $K$-bi-invariant if $\mu^\natural=\mu$. Denote by $M_c^\natural(G)$ the set of complex-valued Radon   measures with compact support that are also $K$-bi-invariant. 
 \begin{definition}
 Let $G$ be a hypergroup and $K$ a compact subhypergroup of $G$. 
  The pair $(G,K)$ is called a  Gelfand pair if the space $(M_c^\natural(G),\ast)$ is commutative.
 \end{definition}
We may refer to this Gelfand pair as a {\it hypergroup Gelfand pair}. 
In the rest of the paper, $(G,K)$ is a hypergroup Gelfand pair. Denote by $\widehat{G^\natural}$ the set of bounded continuous functions  $\phi : G\longrightarrow \mathbb{C}$ such that
 \begin{enumerate}
 \item $\phi$ is $K$-bi-invariant,
 \item $\phi (e)=1$, 
 \item $\forall x,y\in G,\,\displaystyle\int_K \phi(x\ast k\ast y) dk=\phi (x)\phi(y)$,
 \item $\forall x\in G,\,\phi (x^\diamond)=\overline{\phi(x)}$, where $\overline{\phi(x)}$ is the complex conjugate of $\phi(x)$. 
 \end{enumerate}
 The set $\widehat{G^\natural}$ is the dual set of the the hypergroup $G$ \cite{Brou}. When equipped with the topology of uniform convergence on compact sets, $\widehat{G^\natural}$ is a locally compact Hausdorff space. 
 \begin{definition}[\cite{Brou}]
 Let $(G,K)$ be a hypergroup Gelfand pair.
 Let $f\in \mathcal{K}^\natural(G)$. The Fourier transform of $f$ is the map $\widehat{f}: \widehat{G^\natural}\longrightarrow \mathbb{C}$ defined by 
 $$\widehat{f}(\phi)=\displaystyle\int_G \phi (x^\diamond)f(x)dx.$$
 \end{definition}
 By a classical argument, the inverse Fourier transform is given by
 $$f(x)=\displaystyle\int_{\widehat{G^\natural}} \phi (x)\widehat{f}(\phi)d\pi(\phi).$$
 \begin{theorem}[\cite{Brou}]\label{Plancherel}
 Let $(G,K)$ be a hypergroup Gelfand pair. There exists a unique nonnegative measure $\pi$ on $\widehat{G^\natural}$ such that 
 $$\displaystyle\int_G |f(x)|^2dx=\displaystyle\int_{\widehat{G^\natural}}|\widehat{f}(\phi)|^2d\pi(\phi),\, \forall f\in L^1(G)\cap L^2(G).$$
 \end{theorem}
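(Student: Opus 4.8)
The plan is to treat the Fourier transform of Definition~2.8 as the Gelfand transform of the commutative convolution algebra $(\mathcal{K}^\natural(G),\ast)$ equipped with the involution $f^{*}(x)=\overline{f(x^{\diamond})}$, and to read off the Plancherel identity from its action on functions of positive type, in the spirit of Godement's argument for Riemannian symmetric spaces (see \cite{Wolf}). The first step is two identities, valid for all $f,g\in\mathcal{K}^\natural(G)$ and all $\phi\in\widehat{G^\natural}$:
$$\widehat{f\ast g}(\phi)=\widehat f(\phi)\,\widehat g(\phi),\qquad \widehat{f^{*}}(\phi)=\overline{\widehat f(\phi)}.$$
The second is elementary from property~(4) of $\widehat{G^\natural}$ (after substituting $x\mapsto x^{\diamond}$ in the defining integral). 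The first is where the structure of the dual $\widehat{G^\natural}$ is genuinely used: expanding $\widehat{f\ast g}(\phi)=\int_G\int_G(\delta_x\ast\delta_y)(\phi^{\diamond})\,f(x)g(y)\,dx\,dy$, rewriting $(\delta_x\ast\delta_y)(\phi^{\diamond})=\phi(y^{\diamond}\ast x^{\diamond})$ by the involution axiom of hypergroups, inserting a $K$-average over the middle variable (legitimate since $f,g$ are $K$-bi-invariant and the Haar measures of $G$ and $K$ are invariant), and applying the functional equation $\int_K\phi(y^{\diamond}\ast k\ast x^{\diamond})\,dk=\phi(y^{\diamond})\phi(x^{\diamond})$ of property~(3) of $\widehat{G^\natural}$, the double integral factors as $\widehat f(\phi)\widehat g(\phi)$.

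Next, fix $f\in\mathcal{K}^\natural(G)$ and set $p=f\ast f^{*}\in\mathcal{K}^\natural(G)$. Then $p$ is continuous and of positive type, $p(e)=\int_G|f(x)|^{2}\,dx=\|f\|_{L^{2}(G)}^{2}$, and by the two identities $\widehat p=|\widehat f|^{2}\ge 0$. Granting the inversion formula recalled above, which applies to such $p$, evaluation at $x=e$ together with $\phi(e)=1$ (property~(2) of $\widehat{G^\natural}$) gives
$$\|f\|_{L^{2}(G)}^{2}=p(e)=\int_{\widehat{G^\natural}}\widehat p(\phi)\,d\pi(\phi)=\int_{\widehat{G^\natural}}|\widehat f(\phi)|^{2}\,d\pi(\phi)$$
for every $f\in\mathcal{K}^\natural(G)$. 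Equivalently, and without invoking inversion, one applies the hypergroup form of the Bochner--Godement theorem to $p$ to produce a unique nonnegative bounded measure representing it, and observes that the densities $|\widehat f|^{2}$ then pin down one and the same $\pi$, because $\{\widehat f:f\in\mathcal{K}^\natural(G)\}$ separates the points of $\widehat{G^\natural}$.

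To finish, note that $\{|\widehat f|^{2}:f\in\mathcal{K}^\natural(G)\}$ is a subset of $C_{0}(\widehat{G^\natural})$ that is stable under products and separates points; hence $\int_{\widehat{G^\natural}}|\widehat f|^{2}\,d\pi=\|f\|_{L^{2}}^{2}\ge0$ forces $\pi\ge0$, and a Stone--Weierstrass argument gives uniqueness of $\pi$ (two measures agreeing on all $|\widehat f|^{2}$ coincide). For the passage to $L^{1}(G)\cap L^{2}(G)$, recall that $\mathcal{K}^\natural(G)$ is dense in $L^{1}$ and in $L^{2}$, that $\widehat{\;\cdot\;}$ is $\|\cdot\|_{L^{1}}$-contractive into $C_{0}(\widehat{G^\natural})$, and that the identity just proved makes $f\mapsto\widehat f$ an isometry of $(\mathcal{K}^\natural(G),\|\cdot\|_{L^{2}})$ into $L^{2}(\widehat{G^\natural},\pi)$; a standard Cauchy-sequence argument extends it to an isometry $L^{2}\to L^{2}(\pi)$ and propagates the Plancherel identity to all of $L^{1}(G)\cap L^{2}(G)$ (the component of $f$ orthogonal to the $K$-bi-invariant functions being annihilated by $\widehat{\;\cdot\;}$, it suffices to work in the closure of $\mathcal{K}^\natural(G)$).

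The main obstacle is the input from spherical analysis: rigorously establishing either the inversion formula or, equivalently, the Bochner--Godement representation in the hypergroup Gelfand-pair setting --- in particular, that $\widehat{G^\natural}$ really is the Gelfand spectrum of the algebra, that the measure-valued nature of $\delta_x\ast\delta_y$ does not break the positive-type bookkeeping, and that the spectral measure can be chosen independently of $f$. This is precisely the point where the standing assumption that $G$ carries a left Haar measure is indispensable. A lesser nuisance is the careful tracking of the $K$-averages in the computation of $\widehat{f\ast g}$, since on a hypergroup $\delta_x\ast\delta_y$ is a genuine probability measure rather than a point mass.
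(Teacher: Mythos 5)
First, a point of comparison: the paper does not prove this statement at all --- Theorem \ref{Plancherel} is imported verbatim from the reference \cite{Brou}, so there is no internal proof to measure your attempt against. Your outline (multiplicativity of the spherical transform on $\mathcal{K}^\natural(G)$, the identity $\widehat{f^{*}}=\overline{\widehat f}$, positivity of $p=f\ast f^{*}$, evaluation at $e$, then a density argument) is the standard Godement-style route and is in the spirit of how the cited reference proceeds, namely via the Gelfand theory of the commutative algebra of $K$-bi-invariant objects attached to the pair $(G,K)$.

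That said, as a proof the proposal has a genuine gap, which you yourself flag: the entire content of the theorem is the \emph{existence} (and uniqueness) of the measure $\pi$, and both of the tools you offer to produce it --- the inversion formula and the Bochner--Godement representation --- already presuppose a spectral measure on $\widehat{G^\natural}$. (In the paper the inversion formula is stated before Theorem \ref{Plancherel} only ``by a classical argument'', and it is written in terms of the very $\pi$ whose existence the theorem asserts; logically the Plancherel construction comes first.) Until one of these inputs is established for hypergroup Gelfand pairs --- which requires identifying $\widehat{G^\natural}$ with the spectrum of the commutative Banach $*$-algebra of integrable $K$-bi-invariant functions and invoking the abstract Plancherel theorem for the trace $f\mapsto f(e)$ on the associated Hilbert algebra --- the argument is circular. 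A second, smaller error: your closing claim that the identity propagates to all of $L^{1}(G)\cap L^{2}(G)$ because the non-$K$-bi-invariant component of $f$ is ``annihilated by the transform'' actually shows the identity would \emph{fail} for such $f$: that component contributes to $\int_G|f(x)|^{2}\,dx$ but not to $\int_{\widehat{G^\natural}}|\widehat f(\phi)|^{2}\,d\pi(\phi)$, so the equality can only hold on the closure of $\mathcal{K}^\natural(G)$ in $L^{2}$ --- consistent with the fact that the paper only ever applies the theorem on $L^{2,\natural}(G)$.
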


 \section{Sobolev spaces on the hypergroup Gelfand pairs $(G,K)$}\label{Sobolev spaces on the hypergroup Gelfand pair}
 In this section, $(G,K)$ is a hypergroup Gelfand pair. Denote by $L^{2,\natural}(G)$ the space of square integrable (with respect to the Haar measure on $G$)  $K$-bi-invariant complex-valued functions on $G$ and by $L^2(\widehat{G^\natural})$ the space of square integrable complex-valued functions on $G^\natural$ (with respect to the positive measure $\pi$). It follows from the Plancherel  type result (Theorem \ref{Plancherel}) that the Fourier transform can be extended to an isometric isomorphism, from $L^{2,\natural}(G)$ onto $L^2(\widehat{G^\natural})$.
  
   Let $\gamma : \widehat{G^\natural}\longrightarrow \mathbb{R}_+$ be a measurable function  and let $s$ be a positive real. We define the following  Sobolev space. 
 \begin{definition} Let $(G,K)$ be a hypergroup Gelfand pair. The set  
 $$H^{s,\natural}_\gamma (G)=\left\lbrace f\in L^{2,\natural} (G) : \displaystyle\int_{\widehat{G^\natural}}(1+\gamma (\phi)^2)^s|\widehat{f}(\phi)|^2d\pi (\phi)<\infty \right\rbrace$$ provided with the norm  
 $$\|f\|_{H^{s,\natural}_\gamma }=\left(\displaystyle\int_{\widehat{G^\natural}}(1+\gamma (\phi)^2)^s|\widehat{f}(\phi)|^2d\pi (\phi)\right)^{\frac{1}{2}}$$
 will be called a Sobolev space. 
 \end{definition}

 \begin{theorem}
 Let $(G,K)$ be a hypergroup Gelfand pair. Then,  $(H^{s,\natural}_\gamma (G), \|\cdot\|_{H^{s,\natural}_\gamma})$ is a  Banach space. 
 \end{theorem}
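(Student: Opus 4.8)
The plan is to identify $H^{s,\natural}_\gamma(G)$ isometrically with a weighted $L^2$-space on $\widehat{G^\natural}$ and then transport the Riesz--Fischer completeness theorem through the Plancherel isometry of Theorem \ref{Plancherel}. First I would introduce the measure $d\pi_s(\phi):=(1+\gamma(\phi)^2)^s\,d\pi(\phi)$ on $\widehat{G^\natural}$. Since $\gamma$ is measurable and $\pi$ is a nonnegative measure, $\pi_s$ is a well-defined nonnegative measure, and $(L^2(\widehat{G^\natural},\pi_s),\|\cdot\|_{L^2(\pi_s)})$ is a Banach space by the standard theory of $L^2$-spaces. Because $(1+\gamma(\phi)^2)^s\geq 1$ for every $\phi$, we get $\|h\|_{L^2(\pi)}\leq\|h\|_{L^2(\pi_s)}$ for all $h$, so there is a continuous inclusion $L^2(\widehat{G^\natural},\pi_s)\hookrightarrow L^2(\widehat{G^\natural},\pi)$; moreover, if $\pi_s(E)=0$ then, the density being bounded below by $1$, also $\pi(E)=0$, i.e. $\pi\ll\pi_s$.

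Next I would invoke Theorem \ref{Plancherel} and the inversion formula preceding it: the Fourier transform $\mathcal{F}:f\mapsto\widehat{f}$ is an isometric isomorphism of $L^{2,\natural}(G)$ onto $L^2(\widehat{G^\natural},\pi)$. Consequently, for $f\in L^{2,\natural}(G)$ one has $f\in H^{s,\natural}_\gamma(G)$ if and only if $\widehat{f}\in L^2(\widehat{G^\natural},\pi_s)$, and in that case $\|f\|_{H^{s,\natural}_\gamma}=\|\widehat{f}\|_{L^2(\pi_s)}$ directly from the definitions. Linearity of $\mathcal{F}$ together with the vector-space structure of $L^2(\widehat{G^\natural},\pi_s)$ shows that $H^{s,\natural}_\gamma(G)$ is a linear subspace of $L^{2,\natural}(G)$, while homogeneity and the triangle inequality for $\|\cdot\|_{H^{s,\natural}_\gamma}$ are inherited from the norm of $L^2(\widehat{G^\natural},\pi_s)$. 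Positive-definiteness holds because $\|f\|_{H^{s,\natural}_\gamma}=0$ forces $\widehat{f}=0$ $\pi_s$-a.e., hence $\pi$-a.e. since $\pi\ll\pi_s$, hence $f=0$ in $L^{2,\natural}(G)$ by injectivity of $\mathcal{F}$. Thus $\mathcal{F}$ restricts to a surjective linear isometry $H^{s,\natural}_\gamma(G)\to L^2(\widehat{G^\natural},\pi_s)$.

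Completeness then follows formally. Let $(f_n)$ be a Cauchy sequence in $H^{s,\natural}_\gamma(G)$; then $(\widehat{f_n})$ is Cauchy in the complete space $L^2(\widehat{G^\natural},\pi_s)$, so $\widehat{f_n}\to g$ for some $g\in L^2(\widehat{G^\natural},\pi_s)$. By the continuous inclusion $L^2(\widehat{G^\natural},\pi_s)\hookrightarrow L^2(\widehat{G^\natural},\pi)$ we also have $\widehat{f_n}\to g$ in $L^2(\widehat{G^\natural},\pi)$; since $\mathcal{F}$ maps $L^{2,\natural}(G)$ onto $L^2(\widehat{G^\natural},\pi)$, there is $f\in L^{2,\natural}(G)$ with $\widehat{f}=g$. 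As $\widehat{f}=g\in L^2(\widehat{G^\natural},\pi_s)$, we get $f\in H^{s,\natural}_\gamma(G)$, and $\|f_n-f\|_{H^{s,\natural}_\gamma}=\|\widehat{f_n}-g\|_{L^2(\pi_s)}\to 0$. Hence $(H^{s,\natural}_\gamma(G),\|\cdot\|_{H^{s,\natural}_\gamma})$ is a complete normed space, that is, a Banach space.

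The only point that genuinely needs care is the interplay between the two measures $\pi$ and $\pi_s$: one must verify that $\pi_s$ is a bona fide nonnegative measure, that $L^2(\pi_s)\subseteq L^2(\pi)$ continuously, and that $\pi\ll\pi_s$ (equivalently, that the Radon--Nikodym density is bounded below by a positive constant), since this is precisely what allows the Fourier-side limit $g$, a priori only in $L^2(\pi)$ via surjectivity of $\mathcal{F}$, to be recognized in the smaller space $L^2(\pi_s)$, and what makes the norm nondegenerate. Everything else is a routine transcription of the Riesz--Fischer theorem through the isometry $\mathcal{F}$, so I do not anticipate any serious obstacle.
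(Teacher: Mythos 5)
Your proposal is correct and follows essentially the same route as the paper, which identifies $H^{s,\natural}_\gamma(G)$ isometrically with $L^2(\widehat{G^\natural})$ via $f\mapsto (1+\gamma(\cdot)^2)^{s/2}\widehat{f}$ and invokes completeness of $L^2$; your weighted-measure formulation $L^2(\widehat{G^\natural},\pi_s)$ is the same isometry up to the unitary multiplication by $(1+\gamma(\cdot)^2)^{s/2}$. You merely spell out the surjectivity and absolute-continuity details that the paper's one-line proof leaves implicit.
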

 \begin{proof}
 The mapping $f\longmapsto (1+\gamma (\cdot)^2)^{\frac{s}{2}}\widehat{f}(\cdot)$ is an isometric isomorphism from $H^{s,\natural}_\gamma (G)$ onto  $L^2(\widehat{G^\natural})$. Since $L^2(\widehat{G^\natural})$ is a complete space, so is $H^{s,\natural}_\gamma (G)$.
 \end{proof}
 Like the classical case, the space $H^{s,\natural}_\gamma (G)$ comes with a Hilbert space structure as can be seen in the following  corollary.
 \begin{corollary}
 Let $(G,K)$ be a hypergroup Gelfand pair. Then, the space $H^{s,\natural}_\gamma (G)$ is a Hilbert space under the inner product
 $$\langle f,g\rangle_{H^{s,\natural}_\gamma}=\displaystyle\int_{\widehat{G^\natural}}(1+\gamma (\phi)^2)^s\widehat{f}(\phi)\overline{\widehat{g}(\phi)}d\pi (\phi).$$
 \end{corollary}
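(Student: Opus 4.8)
The plan is to leverage the preceding theorem, which already established that the map $T\colon f\longmapsto (1+\gamma(\cdot)^2)^{s/2}\widehat{f}(\cdot)$ is an isometric isomorphism from $H^{s,\natural}_\gamma(G)$ onto $L^2(\widehat{G^\natural})$, together with the fact that $L^2(\widehat{G^\natural})$ is a Hilbert space. First I would define the candidate sesquilinear form $\langle f,g\rangle_{H^{s,\natural}_\gamma}$ by the stated integral and verify that it is well-defined: the integrand is the product $(1+\gamma(\phi)^2)^{s/2}\widehat{f}(\phi)$ times the conjugate of $(1+\gamma(\phi)^2)^{s/2}\widehat{g}(\phi)$, so by Cauchy--Schwarz in $L^2(\widehat{G^\natural})$ it is integrable with absolute value bounded by $\|f\|_{H^{s,\natural}_\gamma}\|g\|_{H^{s,\natural}_\gamma}$, using that $(1+\gamma(\phi)^2)^s = \big((1+\gamma(\phi)^2)^{s/2}\big)^2$ since $1+\gamma(\phi)^2\geq 1>0$.

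Next I would check the axioms of a Hermitian inner product. Linearity in the first argument and conjugate-linearity in the second follow directly from the linearity of the Fourier transform $f\mapsto\widehat{f}$ and the linearity of the integral; conjugate symmetry $\langle g,f\rangle_{H^{s,\natural}_\gamma}=\overline{\langle f,g\rangle_{H^{s,\natural}_\gamma}}$ follows from the identity $\overline{\widehat{g}(\phi)\overline{\widehat{f}(\phi)}}=\widehat{f}(\phi)\overline{\widehat{g}(\phi)}$ together with the fact that $(1+\gamma(\phi)^2)^s$ and the measure $\pi$ are real and nonnegative. Positive-definiteness is immediate: $\langle f,f\rangle_{H^{s,\natural}_\gamma}=\int_{\widehat{G^\natural}}(1+\gamma(\phi)^2)^s|\widehat{f}(\phi)|^2\,d\pi(\phi)=\|f\|_{H^{s,\natural}_\gamma}^2\geq 0$, and this vanishes only if $(1+\gamma(\phi)^2)^{s/2}\widehat{f}(\phi)=0$ $\pi$-a.e., hence $\widehat{f}=0$ $\pi$-a.e. (as the weight is strictly positive), hence $f=0$ in $L^{2,\natural}(G)$ because $f\mapsto\widehat{f}$ is the isometric isomorphism from the Plancherel theorem (Theorem \ref{Plancherel}).

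Finally, I would observe that this inner product induces precisely the norm $\|\cdot\|_{H^{s,\natural}_\gamma}$, since $\langle f,f\rangle_{H^{s,\natural}_\gamma}=\|f\|_{H^{s,\natural}_\gamma}^2$ by the above computation, and completeness with respect to that norm was already proved in the previous theorem; thus $H^{s,\natural}_\gamma(G)$ is a Hilbert space. Alternatively, and more slickly, one can simply transport the inner product of $L^2(\widehat{G^\natural})$ back through the isometric isomorphism $T$: defining $\langle f,g\rangle_{H^{s,\natural}_\gamma}:=\langle Tf,Tg\rangle_{L^2(\widehat{G^\natural})}$ automatically yields a Hilbert space structure, and one checks that this expression unwinds to the stated integral. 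I do not anticipate a genuine obstacle here; the only point requiring a little care is confirming that the weight $(1+\gamma(\phi)^2)^s$ is everywhere strictly positive (which it is, since $\gamma$ takes values in $\mathbb{R}_+$ and $s>0$), so that nullity of the integral forces $f=0$ rather than merely $f=0$ off the support of the weight.
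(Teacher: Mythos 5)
Your proof is correct and matches the paper's (implicit) argument: the paper states this as an immediate corollary of the preceding theorem, where the map $f\longmapsto (1+\gamma(\cdot)^2)^{s/2}\widehat{f}(\cdot)$ was shown to be an isometric isomorphism onto the Hilbert space $L^2(\widehat{G^\natural})$, which is exactly your ``transport the inner product through $T$'' observation. Your detailed verification of the inner-product axioms and of positive-definiteness via the strict positivity of the weight is a sound (if more explicit) rendering of the same idea.
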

 In the sequel, the symbol $X \hookrightarrow Y$ will mean that $X$  is continuously embedded in $Y$.
 \begin{theorem} Let $(G,K)$ be a  hypergroup Gelfand pair. Then,
 $H^{s,\natural}_\gamma (G)\hookrightarrow L^{2,\natural} (G)$ with $\|f\|_{L^{2,\natural}}\leqslant \|f\|_{H^{s,\natural}_\gamma}$.
 \end{theorem}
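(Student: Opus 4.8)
The plan is to reduce everything to a pointwise inequality on the dual side via the Plancherel isomorphism. First I would invoke Theorem \ref{Plancherel} (together with the remark that the Fourier transform extends to an isometric isomorphism $L^{2,\natural}(G)\to L^2(\widehat{G^\natural})$) to write, for any $f\in H^{s,\natural}_\gamma(G)\subseteq L^{2,\natural}(G)$,
$$\|f\|_{L^{2,\natural}}^2=\int_{\widehat{G^\natural}}|\widehat{f}(\phi)|^2\,d\pi(\phi).$$

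Next I would observe the elementary bound: since $\gamma(\phi)^2\geqslant 0$ and $s>0$, we have $1\leqslant (1+\gamma(\phi)^2)^s$ for every $\phi\in\widehat{G^\natural}$. Multiplying by the nonnegative integrand $|\widehat{f}(\phi)|^2$ and integrating against the nonnegative measure $\pi$ gives
$$\int_{\widehat{G^\natural}}|\widehat{f}(\phi)|^2\,d\pi(\phi)\leqslant \int_{\widehat{G^\natural}}(1+\gamma(\phi)^2)^s|\widehat{f}(\phi)|^2\,d\pi(\phi)=\|f\|_{H^{s,\natural}_\gamma}^2.$$
Combining the two displays yields $\|f\|_{L^{2,\natural}}\leqslant\|f\|_{H^{s,\natural}_\gamma}$, which in particular shows the integral defining the $L^{2,\natural}$-norm is finite, so $f\in L^{2,\natural}(G)$, and that the inclusion map is bounded, hence continuous, with operator norm at most $1$.

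Honestly there is no serious obstacle here; the only point requiring a word of care is that the Plancherel identity be available for all $f\in L^{2,\natural}(G)$ and not merely for $f\in L^1(G)\cap L^2(G)$, but this is exactly the density/extension statement recorded just before the definition of $H^{s,\natural}_\gamma(G)$, so I would simply cite it. One could also remark that the same monotonicity argument gives the finer scale of embeddings $H^{s_1,\natural}_\gamma(G)\hookrightarrow H^{s_2,\natural}_\gamma(G)$ whenever $s_1\geqslant s_2$, of which the present statement is the case $s_2=0$.
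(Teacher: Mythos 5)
Your proposal is correct and follows essentially the same route as the paper: apply the Plancherel identity (extended to $L^{2,\natural}(G)$) and then the pointwise bound $1\leqslant (1+\gamma(\phi)^2)^s$ under the integral. The extra remarks on the density/extension point and the monotone scale of embeddings are sound but not needed beyond what the paper already records.
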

 \begin{proof}Let $f\in H^{s,\natural}_\gamma (G)$. We have
 \begin{align*}
 \|f\|_{L^{2,\natural}}^2&=\displaystyle\int_G |f(x)|^2dx\\
 &=\displaystyle\int_{\widehat{G^\natural}} |\widehat{f}(\phi)|^2d\pi (\phi)\\
 &\leqslant \displaystyle\int_{\widehat{G^\natural}}(1+\gamma (\phi)^2)^s|\widehat{f}(\phi)|^2d\pi (\phi)\\
 &=\|f\|_{H^{s,\natural}_\gamma}^2.
 \end{align*}
 \end{proof}
 
 \begin{theorem}
Let $(G,K)$ be a  hypergroup Gelfand pair. If $s>\sigma>0$,  then
$$H^{s,\natural}_\gamma (G)\hookrightarrow  H^{\sigma,\natural}_\gamma (G)$$ with $\|f\|_{H^{\sigma,\natural}_\gamma}\leqslant \|f\|_{H^{s,\natural}_\gamma}, \forall f\in H^{s,\natural}_\gamma (G)$.
 \end{theorem}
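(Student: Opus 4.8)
The plan is to reduce everything to the elementary pointwise inequality $(1+\gamma(\phi)^2)^{\sigma}\leqslant(1+\gamma(\phi)^2)^{s}$, which holds for every $\phi\in\widehat{G^\natural}$ precisely because $\gamma$ takes values in $\mathbb{R}_+$, so $1+\gamma(\phi)^2\geqslant 1$, and the function $t\mapsto t^{a}$ is nondecreasing on $[1,\infty)$ for $a>0$ together with $t^{\sigma}\leqslant t^{s}$ on $[1,\infty)$ whenever $s>\sigma$. First I would fix $f\in H^{s,\natural}_\gamma(G)$; by definition this already gives $f\in L^{2,\natural}(G)$, so the only thing needing verification is that the weighted integral defining $\|f\|_{H^{\sigma,\natural}_\gamma}$ is finite and dominated by $\|f\|_{H^{s,\natural}_\gamma}$.

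Next I would multiply the pointwise inequality by $|\widehat{f}(\phi)|^2\geqslant 0$ and integrate over $\widehat{G^\natural}$ against the Plancherel measure $\pi$, using monotonicity of the integral. This yields
\[
\|f\|_{H^{\sigma,\natural}_\gamma}^2=\int_{\widehat{G^\natural}}(1+\gamma(\phi)^2)^{\sigma}|\widehat{f}(\phi)|^2\,d\pi(\phi)\leqslant\int_{\widehat{G^\natural}}(1+\gamma(\phi)^2)^{s}|\widehat{f}(\phi)|^2\,d\pi(\phi)=\|f\|_{H^{s,\natural}_\gamma}^2<\infty,
\]
which simultaneously shows $f\in H^{\sigma,\natural}_\gamma(G)$ and establishes the norm bound $\|f\|_{H^{\sigma,\natural}_\gamma}\leqslant\|f\|_{H^{s,\natural}_\gamma}$. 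The continuity of the inclusion map $H^{s,\natural}_\gamma(G)\to H^{\sigma,\natural}_\gamma(G)$ is then immediate from this inequality, since it exhibits the inclusion as a bounded linear operator of norm at most $1$.

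I do not anticipate any real obstacle here: the argument is a one-line monotonicity estimate under the integral sign, entirely parallel to the preceding embedding $H^{s,\natural}_\gamma(G)\hookrightarrow L^{2,\natural}(G)$ (which is the degenerate case $\sigma=0$). The only point worth stating explicitly is that the hypothesis $\gamma\geqslant 0$ is exactly what forces the weight to be $\geqslant 1$, so that raising it to a larger power can only increase it; without that sign condition the inequality could fail. I would keep the write-up to the displayed chain of (in)equalities above, perhaps preceded by one sentence recalling why $1+\gamma(\phi)^2\geqslant 1$ gives $(1+\gamma(\phi)^2)^{\sigma}\leqslant(1+\gamma(\phi)^2)^{s}$.
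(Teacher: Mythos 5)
Your argument is correct and is essentially identical to the paper's proof: both rest on the pointwise inequality $(1+\gamma(\phi)^2)^{\sigma}\leqslant(1+\gamma(\phi)^2)^{s}$, which follows from $1+\gamma(\phi)^2\geqslant 1$, and then integrate against $|\widehat{f}(\phi)|^2\,d\pi(\phi)$. Your write-up merely makes the integration step and the boundedness of the inclusion more explicit than the paper does.
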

 \begin{proof}
 Let $s>\sigma>0$. Since $1+\gamma (\phi)^2\geqslant 1$, then $(1+\gamma (\phi)^2)^s\geqslant (1+\gamma (\phi)^2)^\sigma$. Therefore, $\|f\|_{H^{s,\natural}_\gamma}\geqslant\|f\|_{H^{\sigma,\natural}_\gamma}$.
 \end{proof}
 The next theorem uses the notion of uniform boundedness. We recall this notion in the following definition.
\begin{definition}[\cite{Rudin}]
 Let $X$ be a set. A family  $\mathcal{F}$ of complex-valued functions on $X$  is said to be uniformly bounded if there exists a real number $M$ such that for all $x\in X$ and for all $f\in \mathcal{F}$, $|f(x)|\leqslant M$.
\end{definition} 
 
 \begin{theorem}
 Let $(G,K)$ be a  hypergroup Gelfand pair.  Assume that $\widehat{G^\natural}$ is unifromly bounded  and that $\displaystyle\frac{1}{1+\gamma (\cdot)^2}\in L^s(\widehat{G^\natural},\pi)$. Then, there exists $C(\gamma,s)>0$ such that  $$\forall f\in H^{s,\natural}_\gamma (G),\,  \|f\|_{\infty}\leqslant C (\gamma,s) \|f\|_{H^{s,\natural}_\gamma}.$$
 \end{theorem}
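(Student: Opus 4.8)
The plan is to start from the Fourier inversion formula
$f(x)=\int_{\widehat{G^\natural}}\phi(x)\widehat{f}(\phi)\,d\pi(\phi)$
recalled in Section \ref{Preliminaries}, and to bound its right-hand side uniformly in $x$. First I would use the hypothesis that $\widehat{G^\natural}$ is uniformly bounded: there is a constant $M>0$ with $|\phi(x)|\leqslant M$ for every $\phi\in\widehat{G^\natural}$ and every $x\in G$. Hence, for $f\in H^{s,\natural}_\gamma(G)$,
$$|f(x)|\leqslant\int_{\widehat{G^\natural}}|\phi(x)|\,|\widehat{f}(\phi)|\,d\pi(\phi)\leqslant M\int_{\widehat{G^\natural}}|\widehat{f}(\phi)|\,d\pi(\phi).$$

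The key step is to show that this last integral is finite and controlled by $\|f\|_{H^{s,\natural}_\gamma}$. To that end I would factor $|\widehat{f}(\phi)|=(1+\gamma(\phi)^2)^{-s/2}\,(1+\gamma(\phi)^2)^{s/2}|\widehat{f}(\phi)|$ and apply the Cauchy--Schwarz inequality in $L^2(\widehat{G^\natural},\pi)$:
$$\int_{\widehat{G^\natural}}|\widehat{f}(\phi)|\,d\pi(\phi)\leqslant\left(\int_{\widehat{G^\natural}}(1+\gamma(\phi)^2)^{-s}\,d\pi(\phi)\right)^{1/2}\left(\int_{\widehat{G^\natural}}(1+\gamma(\phi)^2)^{s}|\widehat{f}(\phi)|^2\,d\pi(\phi)\right)^{1/2}.$$
The second factor is exactly $\|f\|_{H^{s,\natural}_\gamma}$, and the first factor equals $\|(1+\gamma(\cdot)^2)^{-1}\|_{L^s(\widehat{G^\natural},\pi)}^{s/2}$, which is finite by the assumption $\frac{1}{1+\gamma(\cdot)^2}\in L^s(\widehat{G^\natural},\pi)$. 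Combining the two displays yields $|f(x)|\leqslant C(\gamma,s)\|f\|_{H^{s,\natural}_\gamma}$ with $C(\gamma,s)=M\,\|(1+\gamma(\cdot)^2)^{-1}\|_{L^s(\widehat{G^\natural},\pi)}^{s/2}$, and taking the supremum over $x\in G$ finishes the argument.

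The step that needs the most care — and the one I regard as the genuine obstacle — is the legitimacy of invoking the pointwise inversion formula, since a priori $f$ is only an element of $L^{2,\natural}(G)$. I would handle this by first observing that the Cauchy--Schwarz estimate above already shows $\widehat{f}\in L^1(\widehat{G^\natural},\pi)$; consequently the function $x\mapsto\int_{\widehat{G^\natural}}\phi(x)\widehat{f}(\phi)\,d\pi(\phi)$ is well defined, bounded by $M\|\widehat{f}\|_{L^1}$, and continuous (by the uniform bound on the $\phi$ together with dominated convergence), and it coincides $\pi$-a.e., hence as an element of $L^{2,\natural}(G)$, with $f$. Thus the asserted $L^\infty$ bound is to be read, as usual, for the continuous representative of $f$; I would state this convention at the outset of the proof. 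The remaining manipulations are routine.
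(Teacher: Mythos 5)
Your argument is correct and follows essentially the same route as the paper: factor $|\widehat{f}(\phi)|$ as $(1+\gamma(\phi)^2)^{-s/2}(1+\gamma(\phi)^2)^{s/2}|\widehat{f}(\phi)|$, apply Cauchy--Schwarz, identify the two factors as $\|(1+\gamma(\cdot)^2)^{-1}\|_{L^s}^{s/2}$ and $\|f\|_{H^{s,\natural}_\gamma}$, and use the uniform bound $M$ on $\widehat{G^\natural}$. Your added remark justifying the pointwise use of the inversion formula via the continuous representative of $f$ is a welcome precision that the paper's own proof leaves implicit.
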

 \begin{proof}
 \begin{align*}
 |f(x)|&=\left|\displaystyle\int_{\widehat{G^\natural}}\phi (x)\widehat{f}(\phi)d\pi (\phi)\right|\\
 &\leqslant \displaystyle\int_{\widehat{G^\natural}}|\phi (x)||\widehat{f}(\phi)|d\pi (\phi)\\
 &\leqslant \displaystyle\int_{\widehat{G^\natural}}\frac{|\phi (x)|}{(1+\gamma (\phi)^2)^{\frac{s}{2}}}(1+\gamma (\phi)^2)^{\frac{s}{2}}|\widehat{f}(\phi)|d\pi (\phi)\\
 &\leqslant \sup\limits_{\phi\in \widehat{G^\natural}}|\phi (x)|\displaystyle\int_{\widehat{G^\natural}}\frac{1}{(1+\gamma (\phi)^2)^{\frac{s}{2}}}(1+\gamma (\phi)^2)^{\frac{s}{2}}|\widehat{f}(\phi)|d\pi (\phi)\\
 &\leqslant \sup\limits_{\phi\in \widehat{G^\natural}}|\phi (x)|\left( \displaystyle\int_{\widehat{G^\natural}}\frac{d\pi (\phi)}{(1+\gamma (\phi)^2)^s}\right)^{\frac{1}{2}}\left(\displaystyle\int_{\widehat{G^\natural}}(1+\gamma (\phi)^2)^s|\widehat{f}(\phi)|^2d\pi (\phi)\right)^{\frac{1}{2}}\\
 & \text{(H\"older's inequality.)}\\
 &\leqslant  \sup\limits_{\phi\in \widehat{G^\natural}}|\phi (x)|\left\|\frac{1}{1+\gamma (\cdot)}\right\|_{L^s}^{\frac{s}{2}}\|f\|_{H^{s,\natural}_\gamma}.
 \end{align*}
 Since $\widehat{G^\natural}$ is uniformly bounded, then 
 $\sup\limits_{x\in G}\sup\limits_{\phi\in \widehat{G^\natural}}|\phi (x)|<\infty$. 
 Therefore, $$\sup\limits_{x\in G}|f(x)|\leqslant \sup\limits_{x\in G}\sup\limits_{\phi\in \widehat{G^\natural}}|\phi (x)| \left\|\frac{1}{1+\gamma (\cdot)}\right\|_{L^s}^{\frac{s}{2}}\|f\|_{H^{s,\natural}_\gamma}.$$
 If we set $C(\gamma,s)=\sup\limits_{x\in G}\sup\limits_{\phi\in \widehat{G^\natural}}|\phi (x)|\left\|\frac{1}{1+\gamma (\cdot)}\right\|_{L^s}^{\frac{s}{2}}$, then $\|f\|_{L^\infty}\leqslant C (\gamma,s) \|f\|_{H^{s,\natural}_\gamma}$, which completes the proof.  
 \end{proof}
 The next theorem uses the notion of equicontinuity. We recal this notion in the following definition. 
 \begin{definition}[\cite{Choquet}]
 Let $X$ be a topological space and $Y$ a metric space (with metric denoted by $d$). Let $\mathcal{F}$ be a family of  functions from $X$ into $Y$ that are continuous.  The family $\mathcal{F}$ is said to be equicontinuous at $a\in X$ if for each $\varepsilon>0$, there exists a neighborhood $V_a$ of $a$ such that for all $x\in X$ and for all $f\in \mathcal{F}$,  
 $$x\in V_a\Longrightarrow d(f(a),f(x))<\varepsilon.$$
\end{definition}
 \begin{theorem}Let $(G,K)$ be a  hypergroup Gelfand pair. Assume that  $\widehat{G^\natural}$ is equicontinuous and 
  $(1+\gamma (\cdot)^2)^{-\frac{s}{2}}\in L^2(\widehat{G^\natural})$. If $f\in H^{s,\natural}_\gamma (G)$,  then $f$ is continuous. 
  \end{theorem}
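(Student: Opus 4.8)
The plan is to deduce continuity of $f$ from the Fourier inversion formula, using the two hypotheses exactly where they are needed: the integrability assumption on $(1+\gamma(\cdot)^2)^{-s/2}$ to make the inversion integral \emph{absolutely} convergent, and the equicontinuity of $\widehat{G^\natural}$ to push continuity through that integral. The first step is to observe that $\widehat f\in L^1(\widehat{G^\natural},\pi)$. Indeed, since $f\in H^{s,\natural}_\gamma(G)$ we have $(1+\gamma(\cdot)^2)^{s/2}\widehat f\in L^2(\widehat{G^\natural})$, and by hypothesis $(1+\gamma(\cdot)^2)^{-s/2}\in L^2(\widehat{G^\natural})$; writing $\widehat f$ as the product of these two $L^2$-functions and applying the Cauchy--Schwarz inequality with respect to $\pi$,
\begin{align*}
\int_{\widehat{G^\natural}}|\widehat f(\phi)|\,d\pi(\phi)
&=\int_{\widehat{G^\natural}}\frac{1}{(1+\gamma(\phi)^2)^{s/2}}\,(1+\gamma(\phi)^2)^{s/2}|\widehat f(\phi)|\,d\pi(\phi)\\
&\le\left(\int_{\widehat{G^\natural}}\frac{d\pi(\phi)}{(1+\gamma(\phi)^2)^{s}}\right)^{1/2}\left(\int_{\widehat{G^\natural}}(1+\gamma(\phi)^2)^{s}|\widehat f(\phi)|^2\,d\pi(\phi)\right)^{1/2}\\
&=\left\|(1+\gamma(\cdot)^2)^{-s/2}\right\|_{L^2(\widehat{G^\natural})}\,\|f\|_{H^{s,\natural}_\gamma}<\infty .
\end{align*}
Set $N:=\int_{\widehat{G^\natural}}|\widehat f(\phi)|\,d\pi(\phi)$; if $N=0$ then $f=0$ and there is nothing to prove, so assume $N>0$.

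Next I would invoke the inversion formula recalled in Section~\ref{Preliminaries}, namely $f(x)=\int_{\widehat{G^\natural}}\phi(x)\widehat f(\phi)\,d\pi(\phi)$, which in view of $\widehat f\in L^1(\widehat{G^\natural},\pi)$ and the boundedness of the spherical functions is an absolutely convergent integral, and work with the function it defines. Fix $a\in G$ and $\varepsilon>0$. By equicontinuity of $\widehat{G^\natural}$ at $a$ there is a neighbourhood $V_a$ of $a$ such that for every $x\in V_a$ and every $\phi\in\widehat{G^\natural}$ one has $|\phi(x)-\phi(a)|<\varepsilon/N$. Then for all $x\in V_a$,
$$|f(x)-f(a)|=\left|\int_{\widehat{G^\natural}}\big(\phi(x)-\phi(a)\big)\widehat f(\phi)\,d\pi(\phi)\right|\le\int_{\widehat{G^\natural}}|\phi(x)-\phi(a)|\,|\widehat f(\phi)|\,d\pi(\phi)\le\frac{\varepsilon}{N}\,N=\varepsilon .$$
Thus $f$ is continuous at $a$, and since $a\in G$ is arbitrary, $f$ is continuous on $G$.

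I expect the only genuinely delicate point to be the passage in the second paragraph: one must be sure that the inversion formula really furnishes the asserted continuous representative of the $L^{2,\natural}$-class of $f$ and that the defining integral converges for \emph{every} $x\in G$, not merely $\pi$-almost everywhere. This is precisely where $\widehat f\in L^1(\widehat{G^\natural},\pi)$ is combined with the boundedness of the elements of $\widehat{G^\natural}$ — playing here the role that the uniform boundedness hypothesis played in the previous theorem. Once that is granted, the equicontinuity hypothesis performs all the remaining work, with no need for a separate dominated-convergence argument.
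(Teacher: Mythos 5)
Your proposal is correct and follows essentially the same route as the paper: both use the inversion formula, bound $|f(x)-f(a)|$ by $\sup_\phi|\phi(x)-\phi(a)|\cdot\int|\widehat f|\,d\pi$ via equicontinuity, and control $\int|\widehat f|\,d\pi$ by the Cauchy--Schwarz inequality with the weight $(1+\gamma(\cdot)^2)^{\pm s/2}$. Your version merely reorganizes the bookkeeping (establishing $\widehat f\in L^1$ up front and normalizing by $N$) and adds an explicit, worthwhile remark about the continuous representative, which the paper leaves implicit.
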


 \begin{proof}
 Let $\varepsilon>0$ and let $a\in G$. Since $\widehat{G^\natural}$ is equicontinuous, there exists a neighbourhood $U$ of $a$ in $G$ such that $\forall\phi \in\widehat{G^\natural}, \forall x\in G$, $x\in U$ implies $|\phi(x)-\phi(a)|<\varepsilon$. Let $f\in H^{s,\natural}_\gamma (G)$. Then, 
 \begin{align*}
 |f(x)-f(a)|&=\left|\displaystyle \int_{\widehat{G^\natural}}\phi (x)\widehat{f}(\phi)d\pi (\phi)-\int_{\widehat{G^\natural}}\phi (a)\widehat{f}(\phi)d\pi (\phi)\right|\\
 &\leqslant \displaystyle \int_{\widehat{G^\natural}}|\phi(x)-\phi(a)||\widehat{f}(\phi)|d\pi(\phi)\\
 &\leqslant \varepsilon\displaystyle \int_{\widehat{G^\natural}}|\widehat{f}(\phi)|d\pi(\phi)\\
 &=\varepsilon\displaystyle \int_{\widehat{G^\natural}}(1+\gamma(\phi)^2)^{\frac{s}{2}}|\widehat{f}(\phi)|(1+\gamma(\phi)^2)^{-\frac{s}{2}}d\pi(\phi)\\
 &\leqslant\varepsilon \left(\int_{\widehat{G^\natural}} (1+\gamma(\phi)^2)^{s}|\widehat{f}(\phi)|^2d\pi(\phi)\right)^{\frac{1}{2}}\left( \int_{\widehat{G^\natural}}(1+\gamma(\phi)^2)^{-s}d\pi(\phi)  \right)^{\frac{1}{2}}\\
 & \leqslant\varepsilon \|f\|_{H^{s,\natural}_\gamma}\|(1+\gamma (\cdot)^2)^{-\frac{s}{2}}\|_{L^2}.
 \end{align*}
 Thus, $f$ is continuous. 
 \end{proof}
 \begin{theorem}
Let $(G,K)$ be a  hypergroup Gelfand pair. Assume that  $\widehat{G^\natural}$ is uniformly bounded  and 
  $(1+\gamma (\cdot)^2)^{-\frac{s}{2}}\in L^2(\widehat{G^\natural})$. If $f\in H^{s,\natural}_\gamma (G)$,  then $f$ is bounded and there exists $C(\gamma,s)>0$ such that   
$$\|f\|_{\infty}\leqslant C(\gamma,s) \|f\|_{H^{s,\natural}_\gamma}.$$
 \end{theorem}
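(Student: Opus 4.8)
The plan is to follow the now-familiar template used in the preceding embedding theorems, combining the inverse Fourier representation with Hölder's inequality, but this time bounding the sup-norm directly rather than proving continuity. First I would fix $f \in H^{s,\natural}_\gamma(G)$ and $x \in G$, and write $f(x) = \int_{\widehat{G^\natural}} \phi(x)\widehat{f}(\phi)\,d\pi(\phi)$ using the inversion formula stated in the preliminaries. Taking absolute values inside the integral and inserting the factor $(1+\gamma(\phi)^2)^{s/2}(1+\gamma(\phi)^2)^{-s/2} = 1$, I get
\begin{align*}
|f(x)| &\leqslant \int_{\widehat{G^\natural}} |\phi(x)|\,(1+\gamma(\phi)^2)^{-\frac{s}{2}}\,(1+\gamma(\phi)^2)^{\frac{s}{2}}|\widehat{f}(\phi)|\,d\pi(\phi)\\
&\leqslant \Big(\sup_{\phi\in\widehat{G^\natural}}|\phi(x)|\Big)\int_{\widehat{G^\natural}} (1+\gamma(\phi)^2)^{-\frac{s}{2}}\,(1+\gamma(\phi)^2)^{\frac{s}{2}}|\widehat{f}(\phi)|\,d\pi(\phi).
\end{align*}

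Next I would apply the Cauchy--Schwarz (Hölder with exponents $2,2$) inequality to the remaining integral, splitting it as the product of $\big(\int (1+\gamma(\phi)^2)^{-s}\,d\pi(\phi)\big)^{1/2}$ and $\big(\int (1+\gamma(\phi)^2)^{s}|\widehat{f}(\phi)|^2\,d\pi(\phi)\big)^{1/2}$. The first factor is finite and equals $\|(1+\gamma(\cdot)^2)^{-s/2}\|_{L^2(\widehat{G^\natural})}$ by the hypothesis $(1+\gamma(\cdot)^2)^{-s/2}\in L^2(\widehat{G^\natural})$, and the second factor is exactly $\|f\|_{H^{s,\natural}_\gamma}$. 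This yields $|f(x)| \leqslant \big(\sup_{\phi}|\phi(x)|\big)\,\|(1+\gamma(\cdot)^2)^{-s/2}\|_{L^2}\,\|f\|_{H^{s,\natural}_\gamma}$ for every $x$. Then I invoke uniform boundedness of $\widehat{G^\natural}$: by definition there is $M$ with $|\phi(x)|\leqslant M$ for all $x\in G$ and all $\phi\in\widehat{G^\natural}$, so $\sup_{x\in G}\sup_{\phi\in\widehat{G^\natural}}|\phi(x)| \leqslant M < \infty$. Setting $C(\gamma,s) = \big(\sup_{x\in G}\sup_{\phi\in\widehat{G^\natural}}|\phi(x)|\big)\,\|(1+\gamma(\cdot)^2)^{-s/2}\|_{L^2}$ gives $\|f\|_\infty \leqslant C(\gamma,s)\|f\|_{H^{s,\natural}_\gamma}$, which shows $f$ is bounded and completes the proof.

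There is no serious obstacle here; the argument is essentially a repackaging of the proof of the preceding sup-norm estimate, with the Hölder split using exponents $(2,2)$ instead of the earlier pairing. The only point requiring a little care is making sure that the inversion formula $f(x) = \int_{\widehat{G^\natural}} \phi(x)\widehat{f}(\phi)\,d\pi(\phi)$ is legitimately applied to a general element of $H^{s,\natural}_\gamma(G)$ — one should note that under the stated hypothesis $\widehat{f} \in L^1(\widehat{G^\natural},\pi)$ (by the same Cauchy--Schwarz estimate with $|\widehat{f}(\phi)| = (1+\gamma(\phi)^2)^{-s/2}(1+\gamma(\phi)^2)^{s/2}|\widehat{f}(\phi)|$), so the integral converges absolutely and the representation is valid pointwise. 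Everything else is routine.
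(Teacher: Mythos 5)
Your proof is correct and follows essentially the same route as the paper's: the inversion formula, insertion of the factor $(1+\gamma(\phi)^2)^{s/2}(1+\gamma(\phi)^2)^{-s/2}$, Cauchy--Schwarz, and then uniform boundedness of $\widehat{G^\natural}$ to produce the constant $C(\gamma,s)=M\,\|(1+\gamma(\cdot)^2)^{-s/2}\|_{L^2}$. Your closing remark justifying the pointwise use of the inversion formula by checking $\widehat{f}\in L^1(\widehat{G^\natural},\pi)$ is a worthwhile addition that the paper's own proof leaves implicit.
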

 \begin{proof}
 Let $f$ be in $H^{s,\natural}_\gamma (G)$. 
 \begin{align*}
 |f(x)|&=\left|\displaystyle \int_{\widehat{G^\natural}}\phi(x)\widehat{f}(\phi)d\pi(\phi)\right|\\
 &\leqslant \displaystyle \int_{\widehat{G^\natural}}|\phi(x)||\widehat{f}(\phi)|d\pi(\phi)\\
 &\leqslant\sup\limits_{\phi\in \widehat{G^\natural}}|\phi (x)|\displaystyle \int_{\widehat{G^\natural}}(1+\gamma(\phi)^2)^{\frac{s}{2}}|\widehat{f}(\phi)|(1+\gamma(\phi)^2)^{-\frac{s}{2}}d\pi(\phi)\\
 &\leqslant\sup\limits_{\phi\in \widehat{G^\natural}}|\phi (x)|\left(\displaystyle \int_{\widehat{G^\natural}}(1+\gamma(\phi)^2)^s|\widehat{f}(\phi)|^2d\pi(\phi)\right)^{\frac{1}{2}}\left(\displaystyle \int_{\widehat{G^\natural}} (1+\gamma(\phi)^2)^{-s}d\pi(\phi)  \right)^{\frac{1}{2}}\\
 & \leqslant \sup\limits_{\phi\in \widehat{G^\natural}}|\phi (x)|\|f\|_{H^{s,\natural}_\gamma}\|1+\gamma(\cdot)^2)^{-\frac{s}{2}}\|_{L^2}. 
 \end{align*}
 Since $\widehat{G^\natural}$ is uniformly bounded, there exists a real $M>0$ such that for all $\phi\in \widehat{G^\natural}$ and for all $x\in G$, $|\phi(x)|<M$. Therefore, 
 $$|f(x)|\leqslant M\|f\|_{H^{s,\natural}_\gamma}\|1+\gamma(\cdot)^2)^{-\frac{s}{2}}\|_{L^2}.$$ Thus $f$ is bounded  and $$\|f\|_{\infty}\leqslant C(\gamma,s) \|f\|_{H^{s,\natural}_\gamma}$$ by taking $C(\gamma,s)=M\|1+\gamma(\cdot)^2)^{-\frac{s}{2}}\|_{L^2}.$
 \end{proof}
\section*{Conclusion} 
Sobolev spaces related to the Fourier transform on hypergroup Gelfand pair are constructed and their major properties are studied. More precisely, embedding results are proved.


\begin{thebibliography}{AA}
\bibitem{Behzadan} A. Behzadan and  M. Holst, {Multiplication in Sobolev spaces, revisited}, \emph{Ark. Mat.} \textbf{59} (2021), 275-306, DOI: 10.4310/ARKIV.2021.v59.n2.a2.

\bibitem{Bloom} W.R. Bloom,  H. Heyer, \emph{Harmonic analysis of probability measures on hypergroups}, de Gruyter Studies in Mathematics 20,  Berlin, 1995.

\bibitem{Brou} K. G. Brou, I.  Toure and  K. Kangni, {A Plancherel theorem on a noncommutative hypergroup},  \emph{Int. J. Anal. Appl.} \textbf{20} (2022), 20-32,  DOI : https://doi.org/10.28924/2291-8639-20-2022-32.

\bibitem{Brou2}K. G. Brou and  K. Kangni, { On Gelfand pair over hypergroups}, \emph{Far East J. Math. Sci.} \textbf{132}(1) (2021), 63-76, DOI :  http://dx.doi.org/10.17654/MS132010063

\bibitem{Choquet}  G. Choquet, \emph{Topology}, Academic  Press Inc, 1966.


\bibitem{Gorka1}  P. G\'orka,  T. Kostrzewa and  E. G. Reyes, {Sobolev spaces on locally compact abelian groups:
compact embeddings and local spaces}, \emph{Journal of Function Spaces} (2014), Article ID 404738, 6 pages, DOI : http://dx.doi.org/10.1155/2014/404738.



\bibitem{Gorka2} P.  G\'orka, T. Kostrzewa and  E. G. Reyes, { Sobolev spaces on  locally compact abelian
groups and the bosonic string equation},  \emph{J. Aust. Math. Soc.}  \textbf{98} (2015), 39-53, DOI : 10.1017/S1446788714000433.

 \bibitem{Jewett} R.I. Jewett, {Spaces with an abstract convolution of measures}, \emph{Adv. Math.}  \textbf{18} (1975), 1-101, DOI :  https://doi.org/10.1016/0001-8708(75)90002-x.


\bibitem{Kangni1996}  K. Kangni and   S. Toure, {Transformation de Fourier sph\'erique de type $\delta$}, 
\emph{Ann. math. Blaise Pascal}  \textbf{3}(2) (1996), 117-133. 
DOI : \text{http://www.nudam.org/item/AMBP\_1996\_3\_2\_117\_0/}.

\bibitem{Kangni2001}  K. Kangni and   S. Toure, {Transformation de Fourier sph\'erique de type $\delta$: Applications aux groupes de Lie semi-simples}, \emph{Ann. math. Blaise Pascal} \textbf{8}(2) (2001), 77-88. 
DOI : \text{http://www.nudam.org/item/AMBP\_2001\_8\_2\_77\_0/}.

\bibitem{Krukowski} M. Krukowski, {\it Sobolev spaces on Gelfand pairs}, preprint (2020),  https://arxiv.org/abs/2003.08519v1.
      
\bibitem{Kumar} M. Kumar and  N. S. Kumar, Sobolev spaces on compact groups, \emph{Forum Math.}   \textbf{35}(4) (2023),  901-911, DOI : https://doi.org/10.1515/forum-2022-0076.


\bibitem{Mensah} Y. Mensah, {Sobolev spaces arising from a generalized spherical Fourier transform}, \emph{Adv. Math. Sci. J.} \textbf{10}(7) (2021),  2947-2955, DOI : https://doi.org/10.37418/amsj.10.7.3.

\bibitem{Michael} E. Michael,  Topologies on spaces of subsets,  \emph{Trans. Amer. Math. Soc.} \textbf{71} (1951), 152-182.

\bibitem{Rudin} W. Rudin, \emph{Real and complex analysis}, McGraw-Hill, (1966).

\bibitem{Wolf} J. A. Wolf, \emph{Harmonic analysis on commutative spaces}, Amer. Math. Soc., Providence, 2007.
\end{thebibliography}
\end{document}